\newtheorem{theorem}{Theorem}
\newtheorem{lemma}{Lemma}
\newtheorem{corollary}{Corollary}
\def \endproof {\hfill $\Box$ \smallskip}
\newcommand{\later}[1]{{}}
\newcommand{\old}[1]{{}}
\long\def\ignore#1{}
\newcommand{\NN}{\mathbb{N}} 
\newcommand{\RR}{\mathbb{R}} 
\providecommand{\intd}[0]%
{\;\mbox{d}}
\newcommand{\eps}{\varepsilon}
\newcommand{\dd}{\mathrm{d}}
\title{New bounds on the average distance from the\\
Fermat-Weber center of a planar convex body}
\author{%
Adrian Dumitrescu\footnote{Department of Computer Science,
University of Wisconsin--Milwaukee,
WI 53201-0784, USA\@. Email: \texttt{ad@cs.uwm.edu}.
Supported in part by NSF CAREER grant CCF-0444188.
}
\and
Minghui Jiang\footnote{Department of Computer Science, 
Utah State University, Logan, UT 84322-4205, USA\@.
Email: \texttt{mjiang@cc.usu.edu}.
Supported in part by NSF grant DBI-0743670.
}
\and Csaba D. T\'oth\footnote{Department of Mathematics,
University of Calgary, AB, Canada T2N~1N4. 
E-mail: \texttt{cdtoth@ucalgary.ca}.
Supported in part by NSERC grant RGPIN 35586.}
}
\begin{document}

\maketitle

\begin{abstract}
The Fermat-Weber center of a planar body $Q$ is a point in the plane
from which the average distance to the points in $Q$ is minimal.
We first show that for any convex body $Q$ in the plane, the average
distance from the  Fermat-Weber center of $Q$ to the points of $Q$ is
larger than $\frac{1}{6} \cdot \Delta(Q)$, where $\Delta(Q)$ is the diameter of $Q$.
This proves a conjecture of Carmi, Har-Peled and Katz.
From the other direction, we prove that the same average distance
is at most $\frac{2(4-\sqrt3)}{13} \cdot \Delta(Q) < 0.3490 \cdot \Delta(Q)$.
The new bound substantially improves the previous bound of
$\frac{2}{3 \sqrt3} \cdot \Delta(Q) \approx 0.3849 \cdot \Delta(Q)$ due to
Abu-Affash and Katz, and brings us closer to the conjectured value of
$\frac{1}{3} \cdot \Delta(Q)$.
We also confirm the upper bound conjecture for centrally symmetric
planar convex bodies.
\end{abstract}

\section{Introduction} \label{sec:intro}

The Fermat-Weber center of a measurable planar set $Q$ with positive area
is a point in the plane that minimizes the average distance to the points in $Q$.
Such a point is the ideal location for a base station (e.g., fire station
or a supply station) serving the region $Q$, assuming the region has
uniform density. Given a measurable set $Q$ with positive area and a
point $p$ in the plane, let $\mu_Q(p)$ be the average distance between
$p$ and the points in $Q$, namely,
$$ \mu_Q(p)= \frac{\int_{q \in Q} {\rm dist}(p,q) \intd q}{{\rm area}(Q)}, $$
where ${\rm dist}(p,q) = |pq| $ is the Euclidean distance between $p$ and
$q$.   
Let $FW_Q$ be the Fermat-Weber center of $Q$, and write
$\mu^*_Q = \min \{\mu_Q(p): p\in \RR^2\} =\mu_Q(FW_Q)$.

Carmi, Har-Peled and Katz~\cite{CHK05} showed that there exists a constant
$c>0$ such that $\mu^*_Q \geq c \cdot \Delta(Q) $  holds for any convex body $Q$,
where $\Delta(Q)$ denotes the diameter of $Q$. The convexity is necessary,
since it is easy to construct nonconvex regions where the average
distance from the Fermat-Weber center is arbitrarily small compared to
the diameter. Of course the opposite inequality  $\mu^*_Q \leq c'
\cdot \Delta(Q) $ holds for any body $Q$ (convexity is not required),
since we can trivially take $c'=1$.

Let $c_1$ denote the infimum, and $c_2$ denote the supremum of
$\mu^*_Q/\Delta(Q)$ over all convex bodies $Q$ in the plane.
Carmi, Har-Peled and Katz~\cite{CHK05} conjectured that
$c_1 =\frac{1}{6}$ and $c_2=\frac{1}{3}$. Moreover, they conjectured
that the supremum $c_2$ is attained for a circular disk $D$,
where $\mu^*_D=\frac{1}{3}\cdot \Delta(D)$. They also proved that
$\frac{1}{7} \leq c_1 \leq \frac{1}{6}$.
The inequality $c_1 \leq \frac{1}{6}$ is given by an infinite
sequence of rhombi, $P_\eps$, where one diagonal has some
fixed length, say $2$, and the other diagonal tends to zero; see Fig.~\ref{f1}.
By symmetry, the Fermat-Weber center of a rhombus is its center of
symmetry, and one can verify that $\mu^*_{P_\eps}/\Delta(P_\eps)$
tends to $\frac{1}{6}$. The lower bound for $c_1$ has been recently
further improved by Abu-Affash and Katz from $\frac{1}{7}$ to
$\frac{4}{25}$~\cite{AK08}. Here we establish that $c_1 =\frac{1}{6}$
and thereby confirm the first of the two conjectures of Carmi, Har-Peled and Katz.

\begin{figure} [htb]
\centerline{\epsfxsize=.7\textwidth \epsffile{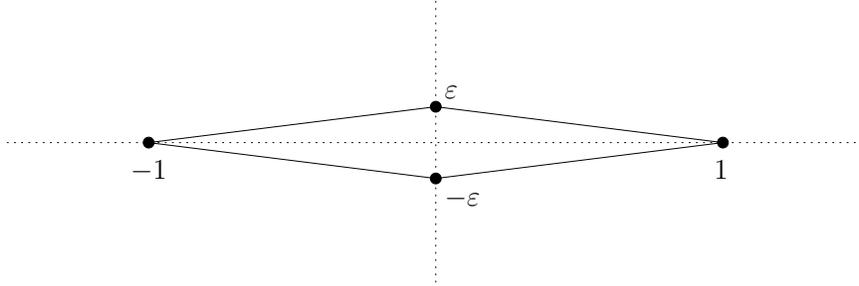}}
\caption{\small A flat rhombus $P_\eps$, with
$ \lim_{\eps \rightarrow 0} \mu^*_{P_\eps}/\Delta(P_\eps) = \frac{1}{6}$.}
\label{f1}
\end{figure}

Regarding the second conjecture, recently Abu-Affash and Katz
proved that $c_2 \leq \frac{2}{3 \sqrt3} = 0.3849\ldots$. Here we further
improve this bound and bring it closer to the conjectured value of $\frac{1}{3}$.
Finally, we also confirm the upper bound conjecture for centrally
symmetric convex bodies $Q$.

Our main results are summarized in the following two theorems:

\begin{theorem}\label{T1}
For any convex body $Q$ in the plane, we have $\mu^*_Q > \frac{1}{6} \cdot \Delta(Q)$.
\end{theorem}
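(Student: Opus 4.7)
The plan is to bound $\mu^*_Q$ from below by the weighted average of horizontal distances along a diameter of $Q$, then reduce the problem to a one-variable inequality on a concave weight function. After normalizing $\Delta(Q) = 1$, choose coordinates in which two diameter endpoints are $(0,0)$ and $(1,0)$; then $Q$ is contained in the lens $\{(x,y) : x^2+y^2\leq 1,\ (x-1)^2+y^2\leq 1\}$, so the vertical-slice length $f(x)$ of $Q$ satisfies $f(0) = f(1) = 0$. By convexity of $Q$, $f$ is concave on $[0,1]$, and $A := \mathrm{area}(Q) = \int_0^1 f(x)\,dx$.

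For any $p = (x_0, y_0) \in \RR^2$ and $q \in Q$, $|pq| \geq |x(p) - x(q)|$, with strict inequality off the horizontal line through $p$. Since $Q$ is two-dimensional, that line has measure zero in $Q$, so
\[
\int_Q |pq|\,dq \;>\; \int_Q |x(p) - x(q)|\,dq \;=\; \int_0^1 f(x)\,|x - x_0|\,dx.
\]
The theorem then follows from the one-variable claim: for every concave $f : [0,1] \to \RR_{\geq 0}$ with $f(0) = f(1) = 0$ and every $x_0 \in \RR$,
\[
\int_0^1 f(x)\,|x - x_0|\,dx \;\geq\; \frac{1}{6}\int_0^1 f(x)\,dx.
\]
Combined with the strict inequality above, this gives $\mu_Q(p) > \frac{1}{6}$ for every $p$, and in particular $\mu^*_Q = \mu_Q(FW_Q) > \frac{1}{6}\Delta(Q)$.

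To prove the one-variable claim, I would use the classical representation of concave endpoint-vanishing functions as nonnegative mixtures of tent functions: $f = \int_0^1 T_p\,d\mu(p)$, where $T_p(x) := \min(x/p,\,(1-x)/(1-p))$ and $\mu$ is a nonnegative finite measure (this is the Green's-function expansion of $f$ for $-d^2/dx^2$ on $[0,1]$ with Dirichlet boundary conditions; the density of $\mu$ is $p(1-p)$ times $-f''$). Since $\int_0^1 T_p\,dx = 1/2$ for every $p$, Fubini's theorem reduces the claim to the single-tent estimate
\[
\int_0^1 T_p(x)\,|x - x_0|\,dx \;\geq\; \frac{1}{12} \qquad (p \in (0,1),\ x_0 \in \RR).
\]
The left side is convex in $x_0$, so its minimum over $x_0$ occurs at the weighted median of $T_p$, which equals $\sqrt{p/2}$ for $p \geq 1/2$ (and is symmetric for $p \leq 1/2$). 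A direct two-case calculation then shows the minimum value over both parameters is exactly $1/12$, attained uniquely at $p = x_0 = 1/2$---the symmetric tent corresponding to the flat rhombus of Figure~\ref{f1}.

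The main obstacle is the one-variable inequality itself: justifying the tent decomposition of concave endpoint-vanishing functions and carrying out the two-case minimization cleanly. The passage from that lemma to the theorem is automatic, and the strict inequality in Theorem~\ref{T1} comes for free from the two-dimensionality of $Q$.
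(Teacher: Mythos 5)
Your proof is correct, but it takes a genuinely different route from the paper's. The paper first performs two Steiner symmetrizations (about a line through a diameter and its perpendicular bisector), shows these preserve the diameter and do not increase $\mu^*_Q$ (Lemmas~\ref{L1} and~\ref{L2}), and then handles the resulting doubly symmetric body by a limiting triangulation of its first-quadrant part into triangles with a vertex at $(1,0)$, each of which has average distance from the origin exceeding $\frac13$ because the $x$-coordinate alone already averages to at least $\frac13$ there (Lemmas~\ref{L3} and~\ref{L4}). You use the same elementary bound ${\rm dist}(p,q)\ge |x(p)-x(q)|$, but apply it directly at an \emph{arbitrary} point $p$, reducing everything to the one-variable inequality $\int_0^1 f(x)|x-x_0|\,\dd x\ge\frac16\int_0^1 f(x)\,\dd x$ for concave $f\ge 0$ vanishing at the endpoints, which you settle by decomposing $f$ into tents and minimizing over the tent parameter and $x_0$. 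I verified the crux: for $p\ge\frac12$ the weighted median is $\sqrt{p/2}$, and the minimal value of $\int_0^1 T_p(x)|x-x_0|\,\dd x$ works out to $\frac{p+1}{6}-\frac13\sqrt{p/2}$, which is increasing on $[\frac12,1]$ and equals $\frac1{12}$ at $p=\frac12$, so the single-tent estimate holds with equality exactly at the symmetric tent. Your route buys three things: it avoids Steiner symmetrization and the $n\to\infty$ triangulation limit entirely; it proves the stronger statement $\mu_Q(p)>\frac16\cdot\Delta(Q)$ for \emph{every} point $p$, not just the Fermat--Weber center; and it makes the extremal profile (the symmetric tent, i.e., the flat rhombus of Fig.~\ref{f1}) completely transparent. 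The one place requiring care is the tent (Green's function) representation: $-f''$ may be an infinite measure near the endpoints (e.g.\ $f(x)=\sqrt{x(1-x)}$), although the reweighted measure $p(1-p)(-f'')$ always has finite total mass $2\int_0^1 f$; alternatively you can sidestep the decomposition by applying Bauer's minimum principle to the linear functional $f\mapsto\int_0^1 f(x)|x-x_0|\,\dd x$ on the compact convex set of admissible $f$ with $\int_0^1 f=\frac12$, whose extreme points are precisely the tents. Either way the argument is sound.
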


\begin{theorem}\label{T2}
For any convex body $Q$ in the plane, we have
$$ \mu^*_Q \leq \frac{2(4-\sqrt3)}{13} \cdot \Delta(Q) < 0.3490 \cdot \Delta(Q). $$
Moreover, if $Q$ is  centrally symmetric, then $\mu^*_Q \leq
\frac{1}{3} \cdot \Delta(Q)$.
\end{theorem}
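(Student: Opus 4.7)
The strategy for both bounds is to exhibit a specific test point $p$ and estimate $\mu_Q(p)$ directly, exploiting the trivial inequality $\mu^*_Q \le \mu_Q(p)$.

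\emph{Centrally symmetric case.} Let $o$ be the center of symmetry of $Q$. Since $\mu_Q$ is strictly convex (it is an integral over a positive-area set of functions $p\mapsto |pq|$ which are strictly convex off the line through $q$) and satisfies $\mu_Q(p)=\mu_Q(2o-p)$ by the symmetry of $Q$, its unique minimizer must coincide with the unique fixed point of $p\mapsto 2o-p$, that is, $FW_Q=o$. In polar coordinates $(\rho,\theta)$ about $o$, with $r(\theta)$ the distance from $o$ to $\partial Q$ in direction $\theta$, a direct calculation gives
$$\mu_Q(o) \;=\; \frac{\int_0^{2\pi}\int_0^{r(\theta)} \rho^2\,d\rho\,d\theta}{\int_0^{2\pi}\int_0^{r(\theta)} \rho\,d\rho\,d\theta} \;=\; \frac{2}{3}\cdot\frac{\int_0^{2\pi} r(\theta)^3\,d\theta}{\int_0^{2\pi} r(\theta)^2\,d\theta}.$$
Central symmetry gives $2o-q\in Q$ whenever $q\in Q$, so $\Delta(Q)\ge|q-(2o-q)|=2|oq|$ and hence $r(\theta)\le \Delta(Q)/2$. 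Pulling this uniform bound out of the numerator yields $\mu^*_Q\le \mu_Q(o)\le \Delta(Q)/3$, as required.

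\emph{General case.} Normalize $\Delta(Q)=1$ and fix a diameter $ab$ of $Q$, placing $a=(-\tfrac12,0)$ and $b=(\tfrac12,0)$. Then $Q\subseteq L:=D(a,1)\cap D(b,1)$, a lens of width $1$. I would take the test point $p$ on the perpendicular bisector of $ab$ at an optimized height $h$; by reflecting $Q$ across the line $ab$ if necessary, the ``heavier'' half of $Q$ may be assumed to lie above this line, which fixes the sign of $h$. To bound $\mu_Q(p)$ I would again pass to polar coordinates about $p$ and combine two constraints on the radial function $r(\theta)$ of $Q$: (i) the containment $Q\subseteq L$, which gives an explicit cap on $r(\theta)$ from the two arcs of $\partial L$; and (ii) the diameter inequality $r(\theta)+r(\theta+\pi)\le 1$, valid for every chord through $p$. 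A pairing analysis across antipodal directions, together with the convexity of $Q$, should reduce the worst case to a finite family of extremal shapes inscribed in $L$.

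\emph{Main obstacle.} The hardest step is producing the sharp constant $\frac{2(4-\sqrt 3)}{13}$, which must come from an explicit extremal configuration. The presence of $\sqrt 3$ suggests that the worst-case $p$ and $Q$ exhibit a $60^\circ$ angular structure inside $L$, consistent with the two arcs of $\partial L$ meeting $ab$ at $60^\circ$, while the denominator $13$ hints that the extremal $Q$ has a piecewise boundary composed of an arc of $\partial L$ together with straight segments or a second arc. Pinning down this configuration, simultaneously optimizing over $h$ and over the shape of $Q$, and ruling out other candidate local extremals of comparable value, is the delicate technical core of the argument.
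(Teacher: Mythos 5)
Your centrally symmetric argument is fine, and in fact more direct than the paper's: the paper runs its ``double sector'' decomposition with $x=y\le D/2$, whereas you get $\mu_Q(o)=\tfrac23\int r^3\,d\theta/\int r^2\,d\theta\le \tfrac23\cdot\tfrac{\Delta(Q)}2$ in one polar-coordinate computation (and you do not even need the claim $FW_Q=o$; $\mu^*_Q\le\mu_Q(o)$ suffices). The general case, however, is not a proof but a plan, and the plan is missing exactly the ideas that make the constant $\frac{2(4-\sqrt3)}{13}$ attainable. The paper takes the test point to be the center $o$ of the \emph{smallest enclosing disk} $\Omega$ (radius $R$), splits $\Omega$ into thin double sectors through $o$, and bounds the average over each double sector by $f(x,y)=\tfrac23\,\frac{x^3+y^3}{x^2+y^2}$, where $x,y$ are the radial extents of $Q$ in the two opposite directions. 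The constraints are your two ($x+y\le D$ and, via Jung's theorem, $x\le R\le D/\sqrt3$) \emph{plus} a lower bound $y\ge\sqrt{R^2-D^2/4}$, which comes from the minimality of $\Omega$: two or three contact points of $Q$ with $\partial\Omega$ form a triangle (or diametral segment) containing $o$, so $Q$ contains a disk of radius $\sqrt{R^2-D^2/4}$ centered at $o$. Optimizing $f$ under all three constraints gives the stated constant, with the worst case at $x=D/\sqrt3$, $y=(1-1/\sqrt3)D$ (the $13$ is just the algebra of this point, not a feature of an extremal boundary).

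The lower bound on the short direction is not a technicality you can defer: it is the step that gets below the old bound $\frac{2}{3\sqrt3}D$. With only your constraints -- containment in the lens $L$ and $r(\theta)+r(\theta+\pi)\le D$ -- a pairwise (antipodal) analysis cannot succeed, because in a single antipodal pair nothing prevents $r(\theta+\pi)\approx 0$, in which case the pair's ratio degenerates to $\tfrac23 r(\theta)$, and $r(\theta)$ can be as large as $1/\sqrt3$ no matter how you choose $h$: for an equilateral triangle of side $1$ with $ab$ one of its sides, the optimal point on the bisector of $ab$ is still at distance $1/\sqrt3$ from some vertex, so the per-pair bound cannot beat $\tfrac{2}{3\sqrt3}\approx 0.385$. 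You would need either the inscribed-disk argument above (which requires moving the test point to the circumcenter, not a point chosen on the bisector of a diameter) or some genuinely global use of convexity linking different directions, which your sketch does not supply. You also implicitly need the paper's elementary lemma that the average distance over a disjoint union is at most the maximum of the sectorwise averages (or an equivalent integral-ratio argument) to reduce to a single antipodal pair; that part is easy, but the extremal analysis you defer to ``pinning down the configuration'' is precisely where the proof lives, so as it stands the main inequality is unproven.
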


\paragraph{Remarks.}

\noindent {\bf 1.}
The average distance from a point $p$ in the plane can be defined analogously
for finite point sets and for rectifiable curves. Observe that for a line segment
$I$ (a one-dimensional convex set), we would have $\mu^*_I/\Delta(I) = \frac{1}{4}$.
It might be interesting to note that while the thin rhombi mentioned
above tend in the limit to a line segment, the value of the
limit $\mu^*_{P_\eps}/\Delta(P_\eps)$ equals $\frac{1}{6}$, not $\frac{1}{4}$.

\medskip
\noindent {\bf 2.}
In some applications, the cost of serving a location $q$
from a facility at point $p$ is ${\rm dist}^\kappa(p,q)$ for some
exponent $\kappa\geq 1$, rather than ${\rm dist}(p,q)$. We can define
$\mu_Q^\kappa(p)=\left(\int_{q\in Q} {\rm dist}^\kappa(p,q)\intd
q\right)/{\rm area}(Q)$
and  $\mu_Q^{\kappa *}=\inf \{\mu_Q^\kappa(p): p \in \RR^2\}$, which is
invariant under congruence. The ratio $\mu_Q^{\kappa *} /
\Delta^\kappa(Q)$ is also invariant under similarity. The proof of
Theorem~\ref{T1} carries over for this variant and shows that
$\mu_Q^{\kappa*}/\Delta^\kappa(Q)>\frac{1}{(\kappa+2)2^\kappa}$ for any
convex body $Q$, and $\lim_{\eps\rightarrow 0} \mu_{P_\eps}^{\kappa
*}/2^\kappa
=\frac{1}{(\kappa+2)2^\kappa}$.
For the upper bound,  the picture is not so clear: $\mu_Q^*/\Delta(Q)$ is
conjectured to be maximal for the circular disk, however,
there is a $\kappa\geq 1$ such that $\mu_Q^{\kappa *}/\Delta^\kappa(Q)$
cannot be maximal for the disk. In particular, if $D$ is a disk of diameter
2 and $R$ is a convex body of diameter $2$ whose smallest enclosing disk
has diameter more than $2$ (e.g., a regular or a Reuleaux triangle of
diameter $2$), then $\mu_D^{\kappa *} <\mu_{R}^{\kappa *}$,
for a sufficiently large $\kappa>1$.
Let $o$ be an arbitrary point in the plane, and let
$D$ be centered at $o$.
Then $\int_{q\in D}{\rm dist}^\kappa(o,q)\intd q =
\int_0^{2\pi} \int_0^1 r^\kappa \cdot r \intd r \intd \theta =
\frac{2\pi}{\kappa+2}$,
and so $\lim_{\kappa\rightarrow \infty}\mu_D^{\kappa *} \leq
\lim_{\kappa\rightarrow \infty} \frac{2}{\kappa+2}= 0$.
On the other hand, for any region $R'$ lying outside of $D$ and
for any $\kappa\geq 1$, we have $\int_{q\in R'} {\rm dist}^\kappa(o,q)
\intd q\geq {\rm area}(R')>0$.  If $R'=R\setminus D$ is the part of $R$ lying
outside $D$, then
$\lim_{\kappa\rightarrow \infty}\mu_{R}^{\kappa *} \geq {\rm
area}(R')/\pi>0$.

\paragraph{Related work.}
Fekete, Mitchell, and Weinbrecht~\cite{FMW00} studied a continuous version
of the problem for polygons with holes, where the distance between two
points is measured by the $L_1$ geodesic distance.
A related question on Fermat-Weber centers in a discrete setting
deals with stars and Steiner stars~\cite{DTX09,FM00}.
\later{
A Steiner star for a set $S$ of $n$ points in $\RR^2$ connects an
arbitrary  center point to all points of $S$, while a
star connects a point $p\in S$ to the remaining $n-1$ points of
$S$. All connections are realized by straight line segments.
The maximum ratio between the lengths of the minimum star and the
minimum Steiner star, over all finite point configurations in $\RR^2$,
is called the star Steiner ratio in $\RR^2$, denoted by $\rho$.
Fekete and Meijer~\cite{FM00}, who were the first to study the star Steiner
ratio, proved that $\rho \leq \sqrt{2}$. The current best upper bound
is (about) $1.3631$, see~\cite{DTX09}.
} 
The reader can find more information on other variants of the
Fermat-Weber problem in~\cite{DKSW02,W93}.

\section{Lower bound: proof of Theorem~\ref{T1}} \label{sec:T1}

In a nutshell the proof goes as follows.
Given a convex body $Q$, we take its Steiner symmetrization with
respect to a supporting line of a diameter segment $cd$, followed by another
Steiner symmetrization with respect to the perpendicular bisector of
$cd$. The two Steiner symmetrizations preserve the area and the
diameter, and do not increase the average distance from the
corresponding Fermat-Weber centers.
In the final step, we prove that the inequality holds for
a convex body with two orthogonal symmetry axes.

\paragraph{Steiner symmetrization with respect to an axis.}
Steiner symmetrization of a convex figure $Q$ with respect to an axis
(line) $\ell$ consists in replacing $Q$ by a new figure $S(Q,\ell)$ with
symmetry axis $\ell$ by means of the following construction:
Each chord of $Q$ orthogonal to $\ell$ is displaced along its line to
a new position where it is symmetric with respect to $\ell$,
see~\cite[pp.~64]{YB61}. The resulting figure $S(Q,\ell)$ is also
convex, and obviously has the same area as $Q$.

A body $Q$ is $x$-monotone if the intersection of $Q$ with every
vertical line is either empty or is connected (that is, a point
or a line segment).
Every $x$-monotone body $Q$ is bounded by the graphs of some functions
$f: [a,b]\rightarrow \RR$ and $g: [a,b]\rightarrow \RR$ such that
$g(x)\leq f(x)$ for all $x\in [a,b]$. The Steiner symmetrization with
respect to the $x$-axis $\ell_x$ transforms $Q$ into an $x$-monotone body
$S(Q,\ell_x)$ bounded by the functions $\frac{1}{2}(f(x)-g(x))$ and
$\frac{1}{2}(g(x)-f(x))$ for $x\in [a,b]$.
As noted earlier, ${\rm area}(S(Q,\ell_x))={\rm area}(Q)$.
The next two lemmas do not require the convexity of $Q$.

\begin{lemma} \label{L1}
Let $Q$ be an $x$-monotone body in the plane with a diameter parallel or
orthogonal to the $x$-axis, then $\Delta(Q)=\Delta(S(Q,\ell_x))$.
\end{lemma}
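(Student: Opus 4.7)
The plan is to prove the two inequalities $\Delta(S(Q,\ell_x)) \leq \Delta(Q)$ and $\Delta(S(Q,\ell_x)) \geq \Delta(Q)$ separately; the hypothesis that some diameter of $Q$ is parallel or orthogonal to $\ell_x$ will only be needed for the second. Throughout, I will write $h(x) = f(x) - g(x)$ for the vertical thickness of $Q$ at abscissa $x$, which is preserved pointwise by the Steiner symmetrization, as is the interval $[a,b]$.

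For the upper bound, I would fix two arbitrary points $p_i = (x_i, y_i) \in S(Q,\ell_x)$ and exhibit a pair of points in $Q$ that is at least as far apart. The construction of $S(Q,\ell_x)$ gives $|y_i| \leq h(x_i)/2$, so by the triangle inequality $|y_1 - y_2| \leq \tfrac{1}{2}(h(x_1) + h(x_2))$. The algebraic identity
\[
(f(x_1) - g(x_2)) + (f(x_2) - g(x_1)) = h(x_1) + h(x_2)
\]
forces at least one of its two left-hand summands to be at least $\tfrac{1}{2}(h(x_1) + h(x_2))$, hence at least $|y_1 - y_2|$. Choosing the corresponding pair $\{(x_1, f(x_1)), (x_2, g(x_2))\}$ or $\{(x_1, g(x_1)), (x_2, f(x_2))\}$ of boundary points of $Q$ preserves the horizontal separation $|x_1 - x_2|$ and only enlarges the vertical one, so this pair in $Q$ is at least as far apart as $p_1, p_2$.

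For the reverse inequality I split on the hypothesized position of a diameter of $Q$. If a diameter segment of $Q$ is parallel to $\ell_x$, then $\Delta(Q)$ coincides with the horizontal span $b - a$ of $Q$ (the horizontal span is an upper bound because a horizontal chord projects to $[a,b]$, and a lower bound because points of $Q$ with extreme $x$-coordinates are at distance at least $b - a$); since $S(Q, \ell_x)$ has the same horizontal span and contains the points $(a,0)$ and $(b,0)$, we get $\Delta(S(Q,\ell_x)) \geq b - a$. If a diameter segment of $Q$ is orthogonal to $\ell_x$, then it is a vertical chord, so $\Delta(Q) = \max_x h(x) = h(x_0)$ for some $x_0 \in [a,b]$, and the vertical chord of $S(Q,\ell_x)$ above $x_0$ has the same length $h(x_0)$.

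The only real technical point is the combination of the algebraic identity with the triangle-inequality bound on $|y_1 - y_2|$ in the upper bound; once that observation is in hand, the rest reduces to pointwise preservation of $h$ and of $[a,b]$ under Steiner symmetrization, which directly powers both sub-cases of the matching lower bound.
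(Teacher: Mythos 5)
Your proof is correct and takes essentially the same route as the paper: the lower-bound direction (the only place the hypothesis on the diameter's orientation is used) is identical, and for the inequality $\Delta(S(Q,\ell_x))\le\Delta(Q)$ you compare against exactly the same two candidate pairs $(x_1,f(x_1)),(x_2,g(x_2))$ and $(x_1,g(x_1)),(x_2,f(x_2))$ in $Q$. The only difference is cosmetic: you treat arbitrary points of $S(Q,\ell_x)$, bound their vertical gap by $\tfrac12(h(x_1)+h(x_2))$ and invoke ``max $\ge$ average'' on the identity $(f(x_1)-g(x_2))+(f(x_2)-g(x_1))=h(x_1)+h(x_2)$, whereas the paper takes diameter-attaining boundary points on opposite sides of the axis and applies the AM--QM inequality, so your variant sidesteps the paper's WLOG step about where those endpoints lie.
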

\begin{proof}
Let $Q'=S(Q,\ell_x)$. If $Q$ has a diameter parallel to the $x$-axis,
then the diameter is $[(a,c), (b,c)]$, with a value $c\in \RR$,
$g(a)= c= f(a)$ and $g(b)= c= f(b)$. That is, $\Delta(Q)=b-a$.
In this case, the diameter of $Q'$ is at least $b-a$, since both points
$(a,0)$ and $(b,0)$ are in $Q'$. If $Q$ has a diameter orthogonal to
the $x$-axis, then the diameter is
$[(x_0,f(x_0)), (x_0,g(x_0))]$ for some $x_0\in [a,b]$, and
$\Delta(Q)=f(x_0)-g(x_0)$. In this case, the diameter of $Q'$ is at least
$f(x_0)-g(x_0)$, since both points $(x_0,\frac{1}{2}(f(x_0)-g(x_0)))$ and
$(x_0,\frac{1}{2}(g(x_0)-f(x_0)))$ are in $Q'$. Therefore, we have
$\Delta(Q')\geq \Delta(Q)$.

Let $A_1$ and $A_2$ be two points on the boundary of $Q'$ such that
$\Delta(Q')= {\rm dist}(A_1,A_2)$. 
Since $Q'$ is symmetric to the $x$-axis, points $A_1$ and $A_2$ cannot
both be on the upper (resp., lower)  
boundary of $Q'$. Assume w.l.o.g.\ that $A_1=(x_1,\frac{1}{2}(f(x_1)-g(x_1)))$ and
$A_2=(x_2,\frac{1}{2}(g(x_2)-f(x_2)))$ for some $a\leq x_1,x_2\leq b$. 
$$\Delta(Q')= {\rm dist}(A_1,A_2)= \sqrt{(x_2-x_1)^2 +
\left(\frac{f(x_1)+f(x_2)-g(x_1)-g(x_2)}{2}\right)^2}.$$
Now consider the following two point pairs in $Q$. The distance between
$B_1=(x_1,f(x_1))$ and $B_2=(x_2,g(x_2))$ is ${\rm dist}(B_1,B_2)=
\sqrt{(x_2-x_1)^2 + (f(x_1)-g(x_2))^2}$.
Similarly, the distance between $C_1=(x_1,g(x_1))$ and $C_2=(x_2,f(x_2))$ is ${\rm
dist}(C_1,C_2) = \sqrt{(x_2-x_1)^2 + (g(x_1)-f(x_2))^2}$.
Using the inequality between the arithmetic and quadratic means, we have
$$ \left(\frac{f(x_1)+f(x_2)-g(x_1)-g(x_2)}{2}\right)^2 \leq
\frac{(f(x_1)-g(x_2))^2 + (g(x_1)-f(x_2))^2}{2}.$$
This implies that ${\rm dist}(A_1,A_2) \leq \max  ( {\rm dist}(B_1,B_2),
{\rm dist}(C_1,C_2))$, and so $\Delta(Q')\leq \Delta(Q)$.
We conclude that $\Delta(Q)=\Delta(S(Q,\ell_x))$.
\end{proof}

\begin{lemma} \label{L2}
If $Q$ is an $x$-monotone body in the plane, then $\mu_Q^*\geq
\mu_{S(Q,\ell_x)}^*$.
\end{lemma}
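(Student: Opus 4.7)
The plan is to show that for every point $p=(p_x,p_y)\in\RR^2$, the orthogonal projection $p'=(p_x,0)$ of $p$ onto $\ell_x$ satisfies $\mu_{Q'}(p')\leq \mu_Q(p)$, where $Q'=S(Q,\ell_x)$. Taking $p = FW_Q$ then immediately gives $\mu^*_{Q'}\leq \mu_{Q'}(p')\leq \mu_Q(p)=\mu^*_Q$, which is the desired inequality. Since ${\rm area}(Q')={\rm area}(Q)$, it is enough to prove the corresponding statement for the (unnormalized) total distance integrals.

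First I would reduce to a one-dimensional slice inequality by Fubini. Write $Q=\{(x,y):x\in[a,b],\ g(x)\leq y\leq f(x)\}$ and $Q'=\{(x,y):x\in[a,b],\ |y|\leq (f(x)-g(x))/2\}$, and set $u=x_0-p_x$ and $h=h(x_0):=f(x_0)-g(x_0)$. The chord-wise statement to prove is
$$
\int_{-h/2}^{h/2}\sqrt{u^2+s^2}\intd s \;\leq\; \int_{g(x_0)}^{f(x_0)}\sqrt{u^2+(y-p_y)^2}\intd y,
$$
for every $x_0\in[a,b]$. Integrating this in $x_0$ over $[a,b]$ and dividing by the common area gives $\mu_{Q'}(p')\leq \mu_Q(p)$.

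Next I would establish the slice inequality. After substituting $s=y-p_y$ on the right, both sides integrate the convex, even function $\phi(s):=\sqrt{u^2+s^2}$ over an interval of the same length $h$: the left-hand side over $[-h/2,h/2]$, the right-hand side over $[m-h/2,\ m+h/2]$ with $m=(f(x_0)+g(x_0))/2-p_y$. Define $\Phi(m):=\int_{m-h/2}^{m+h/2}\phi(s)\intd s$; then by Leibniz's rule $\Phi'(m)=\phi(m+h/2)-\phi(m-h/2)$. Because $\phi$ is even and convex, it is nondecreasing in $|s|$, and $(m+h/2)^2-(m-h/2)^2=2mh$ has the sign of $m$, so $\Phi'(m)$ itself has the sign of $m$. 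Hence $\Phi$ attains its minimum at $m=0$, which is precisely the claimed slice inequality.

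The only step that really requires care is the chord-length bookkeeping: the $x$-monotonicity hypothesis is what guarantees that for each $x_0$ the vertical cross-sections of $Q$ and of $Q'$ are both segments, of the same length $h(x_0)$, so that the two slice integrals share the matching interval length needed for $\Phi$ to be applicable. Once this is verified, the monotonicity of $\Phi$ finishes the proof uniformly, with no case analysis on the position of $p$ relative to $\ell_x$.
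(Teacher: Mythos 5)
Your proposal is correct and takes essentially the same route as the paper: a chord-by-chord comparison (Fubini over vertical slices) using the fact that the distance integral over a segment of fixed length at fixed horizontal offset is minimized when the reference point lies on the segment's perpendicular bisector, applied with $p=FW_Q$ so that $\mu^*_{Q'}\leq\mu_{Q'}(p')\leq\mu_Q(FW_Q)=\mu^*_Q$. The only difference is that you prove the bisector-minimality fact explicitly via the sign of $\Phi'(m)$ and phrase the comparison for an arbitrary point $p$, whereas the paper asserts that fact directly and works only with the Fermat--Weber center of $Q$.
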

\begin{proof}
If $(x_0,y_0)$ is the Fermat-Weber center of $Q$, then
$$\mu_Q^*= \frac{\int_a^b \int_{g(x)}^{f(x)} \sqrt{(x-x_0)^2+(y-y_0)^2} \intd y
\intd x}{{\rm area}(Q)}.$$
Observe that $\int_{g(x)}^{f(x)} \sqrt{(x-x_0)^2+(y-y_0)^2} \intd y$ is the
integral of the distances of the points in a line segment of length
$f(x)-g(x)$ from a point at distance $|x-x_0|$ from the supporting
line of the segment. This integral is minimal if the point is on the
orthogonal bisector of the segment. That is, we have
\begin{eqnarray}
\int_{g(x)}^{f(x)} \sqrt{(x-x_0)^2+(y-y_0)^2} \intd y
&\geq& \int_{g(x)}^{f(x)}
\sqrt{(x-x_0)^2+\left(y-\frac{f(x)-g(x)}{2}\right)^2} \ dy \nonumber\\
&=&  \int_{\frac{1}{2}(g(x)-f(x))}^{\frac{1}{2}(f(x)-g(x))}
\sqrt{(x-x_0)^2+y^2} \intd y.\nonumber
\end{eqnarray}
Therefore, we conclude that
\begin{eqnarray}
\mu_Q^*
& = & \frac{\int_a^b \int_{g(x)}^{f(x)} \sqrt{(x-x_0)^2+(y-y_0)^2} \intd y
\intd x}{{\rm area}(Q)}\nonumber\\
&\geq& \frac{\int_a^b \int_{\frac{1}{2}(g(x)-f(x))}^{\frac{1}{2}(f(x)-g(x))}
\sqrt{(x-x_0)^2+y^2} \intd y \intd x}{{\rm area}(S(Q,x))} =
\mu_{S(Q,\ell_x)}((x_0,0))\geq \mu_{S(Q,\ell_x)}^*.\nonumber 
\end{eqnarray}
\end{proof}

\paragraph{Triangles.} We next consider right triangles of a special
kind, lying in the first quadrant, and show that the average distance from
the origin to their points is larger than $\frac{1}{3}$.

\begin{lemma} \label{L3}
Let $T$ a right triangle in the first quadrant based on the $x$-axis,
with vertices $(a,0)$, $(a,b)$, and $(1,0)$, where $0 \leq a < 1$, and $b>0$.
Then $\mu_T(o) > \frac{1}{3}$.
\end{lemma}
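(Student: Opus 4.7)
The plan is to discard the $y$-contribution to each Euclidean distance and reduce the problem to a first-moment computation. Since $T$ lies in the closed first quadrant, for every $(x,y)\in T$ we have $\sqrt{x^2+y^2}\ge x$, with strict inequality wherever $y>0$. The hypothesis $b>0$ ensures that $T\cap\{y>0\}$ has positive area, so integrating this pointwise inequality over $T$ yields the strict bound
$$
\int_T \sqrt{x^2+y^2}\,\intd A \;>\; \int_T x\,\intd A \;=\; \operatorname{area}(T)\cdot \bar{x}_T,
$$
where $\bar{x}_T$ denotes the $x$-coordinate of the centroid of $T$.

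For the triangle with vertices $(a,0)$, $(a,b)$, $(1,0)$, the centroid sits at $\bigl(\tfrac{2a+1}{3},\,\tfrac{b}{3}\bigr)$, so $\bar{x}_T=\tfrac{2a+1}{3}$. Dividing the displayed inequality by $\operatorname{area}(T)$ then gives $\mu_T(o) > \tfrac{2a+1}{3} \ge \tfrac{1}{3}$, where the last step uses $a\ge 0$. This is exactly the claim, with a small amount of slack when $a>0$.

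I do not expect any real obstacle here: the whole argument rests on the observation that the origin lies on the boundary of the enclosing quadrant, so replacing the Euclidean distance from $o$ by its horizontal projection $x$ only understates the integrand. The one point requiring care is the strictness of the resulting integral inequality, which is immediate from $b>0$ since then the open subset $\{y>0\}\cap T$ has positive area. If one prefers a self-contained calculation avoiding the centroid formula, slicing $T$ by vertical chords of length $h(x)=b(1-x)/(1-a)$ gives $\int_a^1 x\,h(x)\,\intd x = b(1-a)(2a+1)/6$ and $\operatorname{area}(T)=b(1-a)/2$, producing the same ratio $\tfrac{2a+1}{3}$.
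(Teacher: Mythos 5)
Your proof is correct and follows essentially the same route as the paper: both bound $\sqrt{x^2+y^2}$ from below by $x$ (strictly on $\{y>0\}$, which has positive area since $b>0$) and then evaluate the first moment over $T$, the paper's expression $\frac{1+a-2a^2}{3(1-a)}$ being exactly your centroid value $\frac{2a+1}{3}$.
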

\begin{proof}
We use the simple fact that the $x$-coordinate of a point is a lower
bound to the distance from the origin.
\begin{eqnarray*}
\mu_T(o) &=&
\frac{\int_a^1 (\int_0^{b(1-x)/(1-a)} \sqrt{x^2 + y^2} \intd y) \intd x}
{b(1-a)/2} >
\frac{\int_a^1 (\int_0^{b(1-x)/(1-a)} x \intd y) \intd x} {b(1-a)/2} \\
&=&
\frac{\frac{b}{1-a} \int_a^1 x(1-x) \intd x} {b(1-a)/2} =
\frac{2}{(1-a)^2}\left(\frac{x^2}{2} - \frac{x^3}{3}\right) \Big{|}_a^1\\
&=& \frac{2}{(1-a)^2} \cdot \frac{(2a^3-3a^2+1)}{6} =
\frac{2}{(1-a)^2} \cdot \frac{(1-a)(1+a-2a^2)}{6} \\
&=& \frac{1}{(1-a)} \cdot \frac{(1+a-2a^2)}{3} \geq \frac{1}{3}.
\end{eqnarray*}
The last inequality in the chain follows from $0 \leq a<1$.
The inequality in the lemma is strict, since $\sqrt{x^2 + y^2} > x$ for all points
above the $x$-axis.
\end{proof}

\begin{corollary} \label{C1}
Let $P$ be any rhombus. Then $\mu^*_P > \frac{1}{6} \cdot \Delta(P)$.
\end{corollary}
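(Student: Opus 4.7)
The plan is to reduce the rhombus statement directly to Lemma~\ref{L3} via the symmetries of the rhombus. By affine normalization I may place $P$ so that its diagonals lie on the coordinate axes, with the longer diagonal on the $x$-axis having endpoints $(\pm 1,0)$ and the shorter diagonal having endpoints $(0,\pm b)$ for some $0<b\le 1$. Under this normalization the four sides of $P$ have length $\sqrt{1+b^2}\le\sqrt{2}<2$, so the diameter is realized by the long diagonal and $\Delta(P)=2$.

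Next I would exploit that a rhombus is centrally symmetric (in fact has two orthogonal symmetry axes), so the Fermat-Weber center coincides with the center of symmetry, namely the origin $o$; hence $\mu^*_P=\mu_P(o)$. The two axes split $P$ into four congruent right triangles, one per quadrant, each of area $b/2$; let $T$ be the first-quadrant triangle with vertices $(0,0)$, $(1,0)$, $(0,b)$. Averaging over $P$ is the same as averaging over $T$:
$$\mu_P(o)=\frac{\int_P {\rm dist}(o,q)\intd q}{\mathrm{area}(P)}=\frac{4\int_T {\rm dist}(o,q)\intd q}{2b}=\mu_T(o).$$

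Now $T$ fits the hypothesis of Lemma~\ref{L3} with the parameter choice $a=0$: it is a right triangle based on the $x$-axis with vertices $(0,0)$, $(0,b)$, $(1,0)$ and $b>0$. Lemma~\ref{L3} therefore yields $\mu_T(o)>\tfrac{1}{3}$, so $\mu^*_P=\mu_P(o)>\tfrac{1}{3}=\tfrac{1}{6}\cdot\Delta(P)$, as claimed.

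There is essentially no obstacle — all the analytic work was absorbed into Lemma~\ref{L3}; the only things to verify are the two identifications (long diagonal realizes the diameter, and the quadrant piece matches the $a=0$ case of the lemma), both of which are immediate.
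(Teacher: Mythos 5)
Your proof is correct and follows essentially the same route as the paper: normalize the rhombus with vertices $(\pm 1,0)$, $(0,\pm b)$, $b\le 1$, use the symmetry to place the Fermat--Weber center at the origin, reduce to the first-quadrant right triangle, and invoke Lemma~\ref{L3} with $a=0$. Your explicit check that the long diagonal realizes the diameter is a small (welcome) addition the paper leaves implicit; otherwise the arguments coincide.
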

\begin{proof}
Without loss of generality, we may assume that $P$ is symmetric with
respect to both the $x$-axis and the $y$-axis. Let us denote the vertices of $P$ by
$(-1,0)$, $(1,0)$, $(0,-b)$, and $(0,b)$, where $b \leq 1$. We have $\Delta(P)=2$.
By symmetry, $\mu^*_P$ equals the average distance between the origin $(0,0)$ and
the points in one of the four congruent right triangles forming $P$.
Consider the triangle $T$ in the first quadrant.
By Lemma~\ref{L3} (with $a=0$), we have $\mu^*_P = \mu_T(o) > \frac{1}{3}$.
Since $\Delta(P)=2$, we have $\mu^*_P > \frac{1}{6} \cdot \Delta(P)$, as desired.
\end{proof}

\begin{lemma} \label{L4}
Let $T$ be a triangle in the first quadrant with a vertical side
on the line $x=a$, where $0 \leq a <1$, and a third vertex at
$(1,0)$. Then $\mu_T(o) > \frac{1}{3}$.
\end{lemma}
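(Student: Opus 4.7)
The plan is to imitate the proof of Lemma~\ref{L3} almost verbatim, exploiting the fact that once the crude bound $\sqrt{x^2+y^2}\ge x$ is applied, only the \emph{length} of each vertical chord of $T$ matters, not its vertical location. Write the vertical side of $T$ as the segment from $(a,y_1)$ to $(a,y_2)$ with $0 \le y_1 < y_2$ (both coordinates are nonnegative since $T$ lies in the first quadrant). Since the two non-vertical sides of $T$ both pass through $(1,0)$, for each $x\in[a,1]$ the vertical cross-section of $T$ at abscissa $x$ is the segment
$$\left[\,y_1\cdot\frac{1-x}{1-a},\ y_2\cdot\frac{1-x}{1-a}\,\right],$$
of length $(y_2-y_1)(1-x)/(1-a)$, and $\mathrm{area}(T)=\tfrac{1}{2}(y_2-y_1)(1-a)$.

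Next, apply the inequality $\sqrt{x^2+y^2}\ge x$, which is valid because $x\ge a\ge 0$. Integrating this bound over a vertical cross-section simply multiplies $x$ by the chord length, so the numerator of $\mu_T(o)$ is at least
$$\int_a^1 x\cdot(y_2-y_1)\cdot\frac{1-x}{1-a}\intd x\ =\ \frac{y_2-y_1}{1-a}\int_a^1 x(1-x)\intd x.$$
Dividing by $\mathrm{area}(T)$ cancels the factor $y_2-y_1$ and leaves exactly
$$\frac{2}{(1-a)^2}\int_a^1 x(1-x)\intd x,$$
which is the very expression that was shown in the proof of Lemma~\ref{L3} to be at least $\tfrac{1}{3}$ for every $0\le a<1$. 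Strict inequality is inherited from the strict inequality $\sqrt{x^2+y^2}>x$ on the set $\{y>0\}\cap T$, a set of positive two-dimensional measure.

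The main obstacle one might anticipate---that lifting the vertical chords off the $x$-axis would introduce a genuine dependence on $y_1$---simply does not materialize, precisely because the bound $\sqrt{x^2+y^2}\ge x$ is $y$-independent. That is why $y_1$ and $y_2$ enter only through their difference $y_2-y_1$, which cancels against the area, collapsing Lemma~\ref{L4} to the already-established case of Lemma~\ref{L3}.
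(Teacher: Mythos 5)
Your proposal is correct and is essentially the paper's argument in inlined form: the paper slides each vertical chord of $T$ down to the $x$-axis (an area-preserving transformation that does not increase any point's distance to the origin) and then invokes Lemma~\ref{L3}, whereas you apply the same $y$-independent bound $\sqrt{x^2+y^2}\ge x$ directly to $T$ and recover exactly the integral $\frac{2}{(1-a)^2}\int_a^1 x(1-x)\intd x$ from the proof of Lemma~\ref{L3}. The computation, the cancellation of the chord-height data, and the source of strictness are the same, so the proof stands.
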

\begin{proof}
Refer to Fig.~\ref{f2}(ii). Let $U$ be a right triangle obtained from
$T$ by translating each vertical chord of $T$ down until its lower
endpoint is on the $x$-axis. Note that ${\rm area}(T)={\rm area}(U)$.
Observe also that the average distance from the origin decreases in this
transformation, namely $ \mu_T(o) \geq \mu_U(o)$.
By Lemma~\ref{L3}, we have $\mu_U(o) > \frac{1}{3}$, and so
$\mu_T(o) > \frac{1}{3}$, as desired.
\end{proof}

\smallskip
We now have all necessary ingredients to prove Theorem~\ref{T1}.

\paragraph {Proof of Theorem~\ref{T1}.}
Refer to Fig.~\ref{f2}.
Let $Q$ be a convex body in the plane, and let $c,d\in Q$ be two
points at $\Delta(Q)$ distance apart. We may assume that $c=(-1,0)$
and $d=(1,0)$, by a similarity transformation if necessary, so that
$\Delta(Q)=2$ (the ratio $\mu^*_Q/\Delta(Q)$ is invariant under similarities).
Apply a Steiner symmetrization with respect to the $x$-axis, and then a second
Steiner symmetrization with respect to the $y$-axis.
The resulting body $Q'=S(S(Q,\ell_x), \ell_y)$ is convex, and it is
symmetric with respect to both coordinate axes.
We have $\Delta(Q')=\Delta(Q)=2$ by Lemma~\ref{L1}, and in fact $c,d\in Q'$.
We also have $\mu^*_{Q'}\leq \mu^*_Q$ by Lemma~\ref{L2}.
\begin{figure} [b]
\centerline{\epsfxsize=.97\textwidth \epsffile{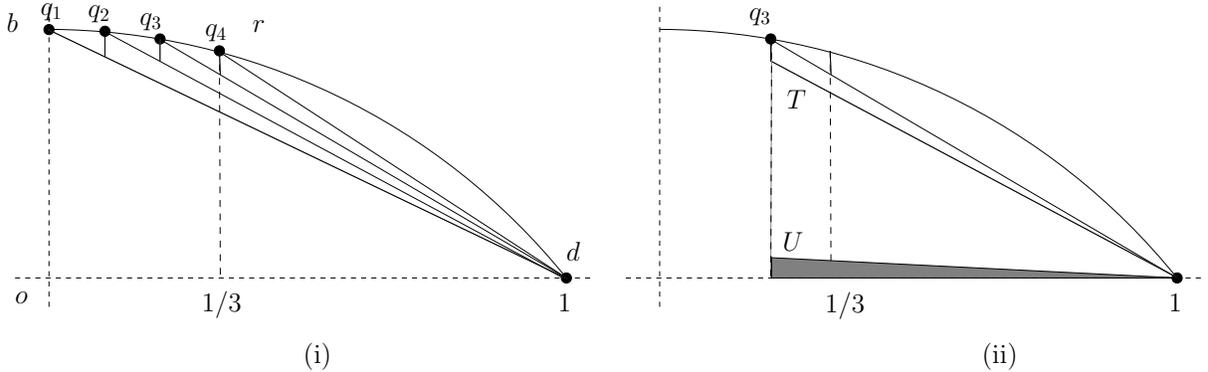}}
\caption{\small (i) The subdivision of $Q_1$ for $n=3$. Here
$o=(0,0)$, $q_1=b=(0,h)$, $q_4=r$, $d=(1,0)$.
(ii) Transformation in the proof of Lemma~\ref{L4}.}
\label{f2}
\end{figure}

Let $Q_1$ be the part of $Q'$ lying in the first quadrant:
$Q_1 =\{(x,y) \in Q': x,y \geq 0\}$.
By symmetry, $FW_{Q'}=o$ and we have $\mu^*_{Q'}=\mu_{Q'}(o)=\mu_{Q_1}(o)$.
Let $\gamma$ be the portion of the boundary of $Q'$ lying in the first quadrant,
between points $b=(0,h)$, with $0<h\leq 1$, and $d=(1,0)$. For any two
points $p,q\in \gamma$ along $\gamma$, denote by $\gamma(p,q)$ the
portion of $\gamma$ between $p$ and $q$.
Let $r$ be the intersection point of $\gamma$ and the vertical line $x=\frac{1}{3}$.

For a positive integer $n$, subdivide $Q_1$ into at most $2n+2$ pieces as follows.
Choose $n+1$ points $b=q_1,q_2\ldots , q_{n+1}=r$ along $\gamma(b,r)$
such that $q_i$ is the intersection of $\gamma$ and the vertical line $x=(i-1)/3n$.
Connect each of the $n+1$ points to $d$ by a straight line segment. These segments
subdivide $Q_1$ into $n+2$ pieces: the right triangle $T_0=\Delta bod$; a
convex body $Q_0$ bounded by $rd$ and $\gamma(r,d)$; and $n$ curvilinear triangles
$\Delta q_idq_{i+1}$ for $i=1,2,\ldots , n$. For simplicity, we assume
that neither $Q_0$, nor any of the curvilinear triangles are degenerate;
otherwise they can be safely ignored (they do not contribute
to the value of $\mu^*_{Q'}$). Subdivide each curvilinear triangle
$\Delta q_idq_{i+1}$ along the vertical line through $q_{i+1}$ into
a small curvilinear triangle $S_i$ on the left and a triangle $T_i$ incident
to point $d$ on the right. The resulting subdivision has $2n+2$ pieces,
under the nondegeneracy assumption.

By Lemma~\ref{L3}, we have $\mu_{T_0}(o)>\frac{1}{3}$. Observe that the difference
$\mu_{T_0}(o)-\frac{1}{3}$ does not depend on $n$, and let $\delta
=\mu_{T_0}(o)-\frac{1}{3}$.
By Lemma~\ref{L4}, we also have  $\mu_{T_i}(o)>\frac{1}{3}$, for
each $i=1,2,\ldots ,n$.
Since every point in $Q_0$ is at distance at least $\frac{1}{3}$ from the origin,
we also have $\mu_{Q_0}(o)\geq \frac{1}{3}$.

For the $n$ curvilinear triangles $S_i$, $i=1,2,\ldots ,n$, we use the
trivial lower bound $\mu_{S_i}(o)\geq 0$. We now show that their total area
$s_n=\sum_{i=1}^n {\rm area}(S_i)$ tends to 0 if $n$ goes to infinity.
Recall that the $y$-coordinates of the points $q_i$ are at most 1, and
their $x$-coordinates are at most $\frac{1}{3}$. This implies that the slope
of every line $q_id$, $i=1,2,\ldots , n+1$,
is in the interval $[-3/2, 0]$. Therefore, $S_i$ is contained in a right triangle
bounded by a horizontal line through $q_i$, a vertical line through $q_{i+1}$, and
the line $q_id$. The area of this triangle is at most $\frac{1}{2}(\frac{1}{3n}\cdot
(\frac{3}{2}\cdot \frac{1}{3n})) = 1/(12n^2)$.
That is, $s_n =\sum_{i=1}^n {\rm area}(S_i) \leq 1/(12n)$.
In particular, $s_n \leq \delta \cdot {\rm area}(T_0)$ for a sufficiently large $n$.
Then we can write
\begin{eqnarray*}
\mu_{Q_1}(o) &=& \frac{\int_{p\in Q_1}{\rm dist}(o,p) \intd p}{{\rm area}(Q_1)}
\geq \frac{ \mu_{Q_0}(o) \cdot {\rm area}(Q_0)+
\sum_{i=0}^n \mu_{T_i}(o) \cdot {\rm area}(T_i)}{{\rm area}(Q_1)} \\
&\geq& \frac{\frac{1}{3}({\rm area}(Q_1)-s_n)+\delta \cdot {\rm area}(T_0)}
{{\rm area}(Q_1)} 
\geq \frac{1}{3}+ \frac{2\delta \cdot {\rm area}(T_0)} {3 \cdot {\rm area}(Q_1)}
>\frac{1}{3}.
\end{eqnarray*}
This concludes the proof of Theorem~\ref{T1}.
\endproof

\medskip
\noindent {\bf Remark.} A finite triangulation, followed by
taking the limit suffices to prove the slightly weaker, non-strict
inequality: $\mu^*_Q \geq \frac{1}{6} \cdot \Delta(Q)$.

\section{Upper bounds: proof of Theorem~\ref{T2}} \label{sec:T2}

Let $Q$ be a planar convex body and let $D=\Delta(Q)$.
Let $\partial Q$ denote the boundary
of $Q$, and let ${\rm int}(Q)$ denote the interior of $Q$.
Let $\Omega$ be the smallest disk enclosing $Q$, and let $o$ and 
 $R$ be the center and respectively the radius of $\Omega$. 
Write $a= \frac{2(4-\sqrt3)}{13}$.
By the convexity of $Q$, $o \in Q$, as observed in~\cite{AK08}.
Moreover, Abu-Affash and Katz~\cite{AK08} have shown that the average
distance from $o$ to the points in $Q$ satisfies
$$ \mu_Q(o) \leq \frac{2}{3 \sqrt3} \cdot \Delta(Q) < 0.3850 \cdot \Delta(Q). $$

Here we further refine their analysis and derive a better upper bound
on the average distance from $o$ to the points in $Q$:
$$ \mu_Q(o) \leq \frac{2(4-\sqrt3)}{13} \cdot \Delta(Q) < 0.3490 \cdot \Delta(Q). $$
Since the average distance from the Fermat-Weber center of $Q$ is not
larger than that from $o$, we immediately get the same upper bound on $c_2$.
We need the next simple lemma established in~\cite{AK08}. Its
proof follows from the definition of average distance.

\begin{lemma} {\rm~\cite{AK08}.} \label{L-AK}
Let $Q_1$, $Q_2$ be two (not necessarily convex) disjoint bodies in
the plane, and $p$ be a point in the plane. Then
$\mu_{(Q_1 \cup Q_2)}(p) \leq \max (\mu_{Q_1}(p), \mu_{Q_2}(p))$.
\end{lemma}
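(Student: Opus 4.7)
The plan is to expand the definition of $\mu$ and recognize $\mu_{Q_1 \cup Q_2}(p)$ as a weighted average of $\mu_{Q_1}(p)$ and $\mu_{Q_2}(p)$, with weights equal to the relative areas of $Q_1$ and $Q_2$. Since any convex combination of two real numbers is at most their maximum, the inequality follows at once.

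Concretely, I would first use the disjointness of $Q_1$ and $Q_2$ to split the integral in the numerator and the area in the denominator additively:
\begin{equation*}
\int_{q \in Q_1 \cup Q_2} \mathrm{dist}(p,q)\,\dd q
 = \int_{q \in Q_1} \mathrm{dist}(p,q)\,\dd q + \int_{q \in Q_2} \mathrm{dist}(p,q)\,\dd q,
\end{equation*}
and $\mathrm{area}(Q_1 \cup Q_2) = \mathrm{area}(Q_1) + \mathrm{area}(Q_2)$ (the measure of the intersection being zero, or the sets being literally disjoint). Denoting $A_i = \mathrm{area}(Q_i)$ for $i=1,2$ and assuming $A_1, A_2 > 0$ (the degenerate cases being trivial), the definition of $\mu$ then gives
\begin{equation*}
\mu_{Q_1 \cup Q_2}(p)
 = \frac{\mu_{Q_1}(p)\cdot A_1 + \mu_{Q_2}(p)\cdot A_2}{A_1+A_2}.
\end{equation*}

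The right-hand side is a convex combination of $\mu_{Q_1}(p)$ and $\mu_{Q_2}(p)$ with positive coefficients summing to one, so it is at most $\max(\mu_{Q_1}(p),\mu_{Q_2}(p))$. This completes the proof. There is no real obstacle here; the only care needed is to handle the possibility that one of the bodies has zero area (in which case the claim reduces to the trivial identity $\mu_{Q_1 \cup Q_2}(p)=\mu_{Q_i}(p)$ for the body $Q_i$ of positive area), and to note that convexity is nowhere used, matching the lemma's statement.
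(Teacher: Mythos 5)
Your proof is correct and is exactly the argument the paper has in mind: it states that the lemma ``follows from the definition of average distance,'' i.e., $\mu_{Q_1\cup Q_2}(p)$ is the area-weighted convex combination of $\mu_{Q_1}(p)$ and $\mu_{Q_2}(p)$, hence at most their maximum. Your handling of the zero-area degenerate case is a fine (and harmless) extra precaution.
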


By induction, Lemma~\ref{L-AK} yields:

\begin{lemma} \label{L5}
Let $Q_1, Q_2, \ldots, Q_n$ be $n$ (not necessarily convex) pairwise
disjoint bodies in the plane, and $p$ be a point in the plane. Then
$$ \mu_{(Q_1 \cup \ldots \cup Q_n)}(p) \leq
\max (\mu_{Q_1}(p), \ldots \mu_{Q_n}(p)). $$
\end{lemma}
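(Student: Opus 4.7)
The plan is to prove Lemma~\ref{L5} by straightforward induction on $n$, using Lemma~\ref{L-AK} as both the base case and the key tool in the inductive step. There is no substantive obstacle here; the statement is a routine generalization of the two-set inequality, and the inductive argument should fit in a few lines.

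For the base case $n=1$ the claim is trivial (both sides equal $\mu_{Q_1}(p)$), and the case $n=2$ is exactly Lemma~\ref{L-AK}. For the inductive step, assume the inequality holds for any $n-1$ pairwise disjoint bodies, with $n \geq 3$. Given pairwise disjoint $Q_1,\ldots,Q_n$, set $Q = Q_1 \cup \cdots \cup Q_{n-1}$, which is disjoint from $Q_n$. Applying Lemma~\ref{L-AK} to the pair $(Q,Q_n)$ gives
$$\mu_{Q_1 \cup \cdots \cup Q_n}(p) = \mu_{Q \cup Q_n}(p) \leq \max\bigl(\mu_Q(p),\mu_{Q_n}(p)\bigr),$$
and then by the inductive hypothesis applied to $Q_1,\ldots,Q_{n-1}$,
$$\mu_Q(p) \leq \max\bigl(\mu_{Q_1}(p),\ldots,\mu_{Q_{n-1}}(p)\bigr).$$
Combining these two inequalities yields
$$\mu_{Q_1 \cup \cdots \cup Q_n}(p) \leq \max\bigl(\mu_{Q_1}(p),\ldots,\mu_{Q_n}(p)\bigr),$$
completing the induction.

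The only minor subtlety to mention is that when forming the union $Q = Q_1 \cup \cdots \cup Q_{n-1}$ we lose the convexity of the pieces, but Lemma~\ref{L-AK} is explicitly stated for not necessarily convex bodies, so this causes no trouble; pairwise disjointness of the original $Q_i$ is exactly what is needed to guarantee that $Q$ and $Q_n$ are disjoint. No further ingredients are required.
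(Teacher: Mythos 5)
Your induction is correct and is exactly the argument the paper intends: Lemma~\ref{L5} is stated there as following from Lemma~\ref{L-AK} ``by induction,'' and your write-up simply carries out that routine induction in detail. Nothing is missing; the observation that the union $Q_1\cup\cdots\cup Q_{n-1}$ need not be convex but Lemma~\ref{L-AK} allows this is the right (and only) point to note.
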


We also need the following classical result of Jung~\cite{J10};
see also~\cite{HD64}.

\begin{theorem}  {\rm (Jung~\cite{J10}).} \label{T-J}
Let $S$ be a set of diameter $\Delta(S)$ in the plane. Then
$S$ is contained in a circle of radius $\frac{1}{\sqrt3} \cdot \Delta(S)$.
\end{theorem}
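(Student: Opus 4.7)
The plan is to work directly with the smallest enclosing disk of $S$. Let $\Omega$ be this disk, with center $o$ and radius $R$; the goal is to show $R \leq \Delta(S)/\sqrt{3}$.

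The key structural fact I would invoke is the standard characterization of smallest enclosing disks: $o$ lies in the convex hull of $S \cap \partial\Omega$. (If not, one could translate $o$ slightly toward this hull while still enclosing $S$, contradicting the minimality of $R$.) Applying Carath\'eodory's theorem in the plane, there is a subset $T \subseteq S \cap \partial\Omega$ with $|T| \in \{2,3\}$ whose convex hull contains $o$. If $|T|=2$, then $o$ lies on the segment joining the two points, so they are antipodal on $\partial\Omega$. If $|T|=3$, then $o$ lies inside (or on the boundary of) the triangle spanned by $T$, which forces that triangle to be non-obtuse and $\Omega$ to be its circumcircle.

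Given this dichotomy, the estimate is immediate. In the antipodal case, the diameter chord of $\Omega$ has both endpoints in $S$, so $2R \leq \Delta(S)$ and hence $R \leq \Delta(S)/2 < \Delta(S)/\sqrt{3}$. In the three-point case $T=\{p,q,r\}$, the triangle's angles sum to $\pi$ and all lie in $(0,\pi/2]$, so the largest angle $\theta$ satisfies $\theta \in [\pi/3,\pi/2]$ and in particular $\sin\theta \geq \sqrt{3}/2$. By the extended law of sines, the side of the triangle opposite $\theta$ has length $2R\sin\theta \geq R\sqrt{3}$. Since that side is a chord with both endpoints in $S$, its length is at most $\Delta(S)$, yielding $R\sqrt{3} \leq \Delta(S)$, as required.

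The main delicacy is really just the structural dichotomy; once it is in place, the bound reduces to one line of trigonometry based on the fact that the largest angle of a non-obtuse triangle is at least $\pi/3$. As an alternative route, the same constant $1/\sqrt{3}$ can be obtained via Helly's theorem in the plane applied to the family of closed disks $\{B(p,\Delta(S)/\sqrt{3}) : p \in S\}$; there the pairwise intersection is trivial, and the $3$-wise intersection reduces to the same acute/obtuse circumradius case analysis that appears above.
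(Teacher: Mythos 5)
The paper does not prove this statement at all: it is quoted as a classical result of Jung and used as a black box (only the consequence $\frac12 D \le R \le \frac1{\sqrt3}D$ is extracted), so there is no internal proof to compare yours against. Your argument is the standard proof of Jung's theorem and is correct: the characterization of the minimum enclosing disk (its center $o$ lies in the convex hull of $S\cap\partial\Omega$), Carath\'eodory to reduce to two or three contact points, the antipodal case giving $2R\le\Delta(S)$, and the three-point case giving a non-obtuse triangle inscribed in $\partial\Omega$ whose largest angle $\theta\ge\pi/3$ yields a chord of length $2R\sin\theta\ge\sqrt3\,R$ between points of $S$, hence $R\le\Delta(S)/\sqrt3$. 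Two small points of hygiene: since $S$ is an arbitrary set of finite diameter, you should pass to its closure (which has the same diameter and the same minimum enclosing disk) so that $S\cap\partial\Omega$ is nonempty and compactness arguments apply; and the structural fact you invoke is itself the crux and deserves the short perturbation argument you sketch (if $o$ were separated from the contact points by a line, translating $o$ slightly across that line strictly decreases the maximum distance to $S$). Your alternative route via Helly's theorem applied to the disks $B(p,\Delta(S)/\sqrt3)$, $p\in S$, is also the classical second proof and works for infinite $S$ because the disks are compact and convex; either route is a legitimate self-contained substitute for the citation.
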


By Theorem~\ref{T-J} we have 

\begin{equation}\label{eq:RD}
\frac12 D \le R \le \frac1{\sqrt3} D.
\end{equation}

Observe that the average distance from the center
of a circular sector of radius $r$ and center angle $\alpha$ to the
points in the sector is
\begin{equation} \label{E1}
\frac{\int_0^r \alpha x^2 \intd x}{\int_0^r \alpha x \intd x}=
\frac{\alpha r^3/3}{\alpha r^2/2} = \frac{2r}{3}.
\end{equation}

\paragraph {Proof of Theorem~\ref{T2}.}
If $o \in \partial Q$ then $Q$ is contained in a
halfdisk $\Theta$ of $\Omega$, of the same diameter $D$, with $o$ as
the midpoint of this diameter. Then by \eqref{E1}, it follows that 
$ \mu_Q(o) \leq \frac{1}{3} \cdot D$, as required. 

We can therefore assume that $o \in {\rm int}(Q)$. 
Let $\eps>0$ be sufficiently small.
For a large positive integer $n$, subdivide $\Omega$
into $n$ congruent circular double sectors (wedges) $W_1,\ldots,W_n$,
symmetric about $o$ (the center of $\Omega$), where each sector
subtends an angle $\alpha=\pi/n$. Consider a double sector $W_i =U_i \cup V_i$,
where $U_i$ and $V_i$ are circular sectors of $\Omega$.
Let $X_i \subseteq U_i$, and $Y_i \subseteq V_i$
be two minimal circular sectors centered at $o$ and
containing $U_i \cap Q$, and $V_i \cap Q$, respectively:
$ U_i \cap Q \subseteq X_i$, and $ V_i \cap Q \subseteq Y_i$.
Let $x_i$ and $y_i$ be the radii of $X_i$ and $Y_i$, respectively.
Let $X'_i \subseteq X_i$, and $Y'_i \subseteq Y_i$ be two circular
subsectors of radii $(1-\eps)x_i$ and $(1-\eps)y_i$, respectively.
Since $o \in {\rm int}(Q)$, we can select $n=n(Q,\eps)$ large enough, so
that for each $1 \leq i \leq n$, the subsectors $X'_i$ and $Y'_i$
are nonempty and entirely contained in $Q$. That is, for every $i$, we have
\begin{equation} \label{E2}
X'_i \cup Y'_i \subseteq W_i \cap Q \subseteq X_i \cup Y_i.
\end{equation}

It is enough to show that for any double sector $W=W_i$, we have
$$ \lim_{\eps \to 0} \mu_{(W \cap Q)}(o) \leq a D, $$
since then, Lemma~\ref{L5} (with $W_i$ being the $n$ pairwise disjoint
regions) will imply that $ \mu_Q(o) \leq a D$, concluding the proof of
Theorem~\ref{T2}. For simplicity, write $x=x_i$, and $y=y_i$.
Obviously the diameter of $W \cap Q$ is at most $D$, hence  $x+y \leq D$.
We can assume w.l.o.g.\ that $y \leq x$, so by Theorem~\ref{T-J}
we also have $x \leq \frac{1}{\sqrt3} \cdot D $. Hence so far, our 
constraints are:

\begin{equation} \label{E3}
0< y \leq x \leq \frac{1}{\sqrt3} \cdot D
\hspace{1cm}\mbox{\rm and}\hspace{1cm} x+y \leq D.
\end{equation}

By the minimality of the disk $\Omega$, the convex body $Q$
either contains three points $q_1,q_2,q_3$ on the boundary of $\Omega$
such that the triangle $q_1q_2q_3$ contains the disk center $o$ in the interior,
or contains two points $q_1,q_2$ on the boundary of $\Omega$
such that the segment $q_1q_2$ goes through the disk center $o$.
In the latter case, the segment $q_1q_2$ can be viewed as a degenerate triangle
$q_1q_2q_3$ with two coinciding vertices $q_2$ and $q_3$.

Let $r$ be the radius of the largest disk centered at $o$
that is contained in the convex body $Q$.
Then $r$ is at least the distance from $o$ to the longest side of the triangle
$q_1q_2q_3$, say $q_1q_2$.
Since $|q_1q_2| \le D$, $|o q_1| = |o q_2| = R$,
we have
$$
r \ge \sqrt{R^2 - D^2/4}.
$$
Then the constraints in~\eqref{E3} can be expanded to the following:
\begin{equation}\label{eq:constraints-new}
\sqrt{R^2 - D^2/4} \le y \le x \le R \le D/\sqrt3
\quad\textup{and}\quad
x + y \le D.
\end{equation}

By the definition of average distance, we can write
\begin{eqnarray} \label{E4}
\mu_{(W \cap Q)}(o) &=&
\frac{\int_{p\in (W \cap Q)}{\rm dist}(o,p) \intd p}{{\rm area}(W \cap Q)}\nonumber\\
&\leq& \dfrac{\alpha \cdot \frac{x^2}{2} \cdot \frac{2x}{3} +
\alpha \cdot \frac{y^2}{2} \cdot \frac{2y}{3} }
{\alpha (1-\eps)^2 \cdot \left(\frac{x^2}{2} + \frac{y^2}{2}\right)} 
= \frac{2}{3} \cdot \frac{x^3+y^3}{(1-\eps)^2 \cdot (x^2+y^2)}.
\end{eqnarray}
Let
\begin{equation}\label{eq:f}
f(x,y)= \frac{2}{3} \cdot \frac{x^3+y^3}{x^2+y^2},
\textrm{ \ and \ }
f_1(x,y,\eps)= \frac{2}{3} \cdot \frac{x^3+y^3}{(1-\eps)^2 \cdot (x^2+y^2)}.
\end{equation}
Clearly for any feasible pair $(x,y)$, we have
$$
\lim_{\eps \to 0} f_1(x,y,\eps) = f(x,y).
$$

It remains to maximize $f(x,y)$ subject to the constraints in
\eqref{eq:constraints-new}. We will show that under these constraints,
\begin{equation}\label{E10}
f(x,y) \leq \frac{2(4-\sqrt3)}{13} \cdot D.
\end{equation}
Then
$$
\lim_{\eps \to 0} \mu_{(W \cap Q)}(o) \leq 
\lim_{\eps \to 0} f_1(x,y,\eps) = f(x,y)
\leq \frac{2(4-\sqrt3)}{13} \cdot D,
$$
as required.

We next verify the upper bound in~\eqref{E10}.
Throughout our analysis,
we may assume that $D$ is a fixed constant and 
$x$, $y$, and $R$ are variable parameters.
Substituting $z = y/x$ in~\eqref{eq:f}, we have
$$
f(x, y) = g(x, z) = \frac{2x}3 \cdot \frac{1 + z^3}{1 + z^2}.
$$
Then, taking the partial derivative of $g(x,z)$ with respect to $z$, we have

\begin{align*}
\frac{\partial}{\partial z} g(x, z) &= \frac{2x}3 \cdot
        \left( \frac{3 z^2}{1 + z^2} - \frac{1 + z^3}{(1 + z^2)^2}
2z\right) \\
&= \frac{2x}3 \cdot \frac{3 z^2 (1 + z^2) - (1 + z^3) 2z}{(1 + z^2)^2}
= \frac{2x}3 \cdot \frac{z (z^3 + 3 z - 2)}{(1 + z^2)^2}.
\end{align*}

The cubic equation $z^3 + 3 z - 2 = 0$ has exactly one real root
$z_0 = (\sqrt2 + 1)^{1/3} - (\sqrt2 - 1)^{1/3} = 0.596\ldots.$
Thus for a fixed $x$, the function $g(x, z)$ is strictly decreasing for
$0 \le z \le z_0$ and is strictly increasing for $z_0 \le z \le 1$.
Therefore,
by the upper bound that
$x + y \le D$
and the lower bound that
$\sqrt{R^2 - D^2/4} \le r \le y$
in~\eqref{eq:constraints-new},
the function $f(x, y)$ is maximized when $y$ takes one of the following
two extreme values:
$$
y_1 = \sqrt{R^2 - D^2/4}
\quad\textup{and}\quad
y_2 = D - x.
$$
By the inequality that $x \le R \le D/\sqrt3$ in~\eqref{eq:constraints-new},
it follows that
$x + y_1 \le R + \sqrt{R^2 - D^2/4} \le D/\sqrt3 + D/\sqrt{12} < D$.
Since
$x + y_2 = D$,
we have $y_1 < y_2$.

\paragraph{Case 1.}

We first consider the easy case that $y = y_2$. Then $x + y = D$,
and we have
$$
f(x, y) = \frac23 \cdot \frac{x^3 + y^3}{x^2 + y^2}
= \frac23 \cdot \frac{(x+y)^3 - 3(x+y)xy}{(x+y)^2 - 2xy}
= \frac23 \cdot \frac{D^3 - 3Dxy}{D^2 - 2xy}.
$$
Substituting $w = xy$,
we tranform the function $f(x,y)$ to a function $h_1(w)$:
$$
f(x, y) = h_1(w) = \frac23 \cdot \frac{3Dw - D^3}{2w - D^2}.
$$

The function $h_1(w)$ is decreasing in $w$ because
\begin{align*}
\frac{\dd}{\dd w} h_1(w) &= \frac23 \cdot
        \left( \frac{3D}{2w - D^2} - \frac{2(3Dw - D^3)}{(2w - D^2)^2}
\right) \\
&= \frac23 \cdot \frac{3D(2w - D^2) - 2(3Dw - D^3)}{(2w - D^2)^2}
= \frac23 \cdot \frac{-D^3}{(2w - D^2)^2} \le 0.
\end{align*}

Thus $f(x,y)$ is maximized when $xy$ is minimized.
With the sum $x + y$ fixed at $D$,
and under the constraint that $x \le R \le D/\sqrt3$
in~\eqref{eq:constraints-new},
the product $xy$ is minimized when
$x = \frac1{\sqrt3} D$ and $y = \left(1 - \frac1{\sqrt3}\right) D$.
Thus we have
\begin{equation}\label{eq:case1}
f(x, y) \le \frac23 \cdot
        \frac{\left(\frac1{\sqrt3}\right)^3 + \left(1 - \frac1{\sqrt3}\right)^3}
        {\left(\frac1{\sqrt3}\right)^2 + \left(1 - \frac1{\sqrt3}\right)^2} D
= \frac{2(4 - \sqrt3)}{13} D
= 0.3489\ldots D.
\end{equation}

\paragraph{Case 2.}

We next consider the case\footnote{This case, when $x+y <D$, 
has been mistakenly overlooked in the proof given in~\cite{DT09}.}
that $y = y_1$.
With $y$ fixed,
the function $f(x,y)$ is maximized when $x$ is as large as possible because
\begin{align*}
\frac{\partial}{\partial x} f(x, y) &= \frac23 \cdot
        \left( \frac{3x^2}{x^2 + y^2} - \frac{x^3 + y^3}{(x^2 + y^2)^2}2x \right)
\\
&= \frac23 \cdot \frac{3x^2(x^2 + y^2) - (x^3 + y^3)2x}{(x^2 + y^2)^2}
\\
&= \frac23 \cdot \frac{x(x^3 + 3xy^2 - 2y^3)}{(x^2 + y^2)^2}
\\
&\ge \frac23 \cdot \frac{x(y^3 + 3y^3 - 2y^3)}{(x^2 + y^2)^2} \ge 0.
\end{align*}
Thus for $y= \sqrt{R^2 - D^2/4}$ and 
under the constraint that $x \le R$ in~\eqref{eq:constraints-new},
the function $f(x, y)$ is maximized when
$x = R$ and
$y = \sqrt{R^2 - D^2/4} = \sqrt{x^2 - D^2/4}$.
It follows that
$$
\frac{\dd x}{\dd R} = 1
\qquad\textup{and}\qquad
\frac{\dd y}{\dd R} = \frac{\dd \sqrt{x^2 - D^2/4}}{\dd R}
= \frac{x}{\sqrt{x^2 - D^2/4}} = x/y.
$$

Let $h_2(R) = f(R, \sqrt{R^2 - D^2/4})$.
We next show that $h_2(R)$ is increasing in $R$.
Taking the derivative, we have
\begin{align*}
\frac{\dd}{\dd R} h_2(R) &= \frac23 \cdot
        \left( \frac{3x^2 \frac{\dd x}{\dd R} + 3y^2 \frac{\dd y}{\dd R}}{x^2 + y^2}
                - \frac{x^3 + y^3}{(x^2 + y^2)^2}
                        \left( 2x \frac{\dd x}{\dd R} + 2y \frac{\dd y}{\dd R} \right)
        \right)
\\
&= \frac23 \cdot
        \left( \frac{3x^2 + 3y^2 (x/y)}{x^2 + y^2}
                - \frac{x^3 + y^3}{(x^2 + y^2)^2}(2x + 2y(x/y))
        \right)
\\
&= \frac23 \cdot
        \frac{(3x^2 + 3xy)(x^2 + y^2) - (x^3 + y^3)(2x + 2x)}{(x^2 + y^2)^2}
\\
&= \frac23 \cdot
        \frac{(3x^4 + 3x^2y^2 + 3x^3y + 3xy^3) - (4x^4 + 4xy^3)}{(x^2 + y^2)^2}
\\
&= \frac23 \cdot
        \frac{(x^4 + 3x^2y^2 + 3x^3y + xy^3) - (2x^4 + 2xy^3)}{(x^2 + y^2)^2}
\\
&= \frac23 \cdot
        \frac{x^4}{(x^2 + y^2)^2} \cdot \big( (1+y/x)^3 - 2 - 2 (y/x)^3 \big).
\end{align*}
Substituting $z = y/x$,
we simplify the last factor
$(1+y/x)^3 - 2 - 2 (y/x)^3$
in the resulting expression above to
$$
h_3(z) = (1+z)^3 - 2 - 2 z^3.
$$
To show that $\frac{\dd}{\dd R} h_2(R) > 0$,
it remains to show that $h_3(z) > 0$.
For $0 \le z \le 1$,
the function $h_3(z)$ is increasing in $z$ because
$$
\frac{\dd}{\dd z} h_3(z) = 3(1+z)^2 - 6 z^2  = -3(1-z)^2 + 6 
\geq 6-3 > 0.
$$
Recall that $x \ge y$.
If $R \le \frac{3(4 - \sqrt3)}{13} D$,
then we would easily have
$$
f(x, y) = \frac23 \cdot \frac{x^3 + y^3}{x^2 + y^2}
\le \frac23 \cdot \frac{x^3}{x^2}
= \frac23 x \le \frac 23 R \le \frac{2(4 - \sqrt3)}{13} D,
$$
which matches the upper bound in case~1.
Now suppose that $R > \frac{3(4 - \sqrt3)}{13} D$.
Then
$$ D/R < \frac{13}{3(4 - \sqrt3)} 
{\rm \ and \ }
z = y/x = \sqrt{1 - (D/R)^2/4} > 
\sqrt{1 - \left(\frac{13}{3(4 - \sqrt3)}\right)^2 \bigg{/} 4 }
= 0.2955\ldots. $$
It follows that
$$ h_3(z) > h_3\left(\sqrt{1 - \left(\frac{13}{3(4 - \sqrt3)}\right)^2 \bigg{/} 4 }\right)
= 0.1226\ldots > 0, $$
hence 
$$ \frac{\dd}{\dd R} h_2(R) > 0. $$
We have shown that the function $h_2(R)$ is increasing in $R$.
Then,
under the constraint that $R \le D / \sqrt3$ in~\eqref{eq:constraints-new},
$h_2(R)$ is maximized when $R = \frac1{\sqrt3} D$.
Correspondingly,
$f(x,y)$ is maximized when
$x = \frac1{\sqrt3} D$ and $y = \frac1{\sqrt{12}} D$.
Thus
\begin{equation}\label{eq:case2}
f(x, y) \le \frac23 \cdot
        \frac{\left(\frac1{\sqrt3}\right)^3 + \left(\frac1{\sqrt{12}}\right)^3}
        {\left(\frac1{\sqrt3}\right)^2 + \left(\frac1{\sqrt{12}}\right)^2} D
= \frac{\sqrt3}5 D
= 0.3464\ldots D, 
\end{equation}
which is (slightly) smaller than the upper bound obtained in case~1.
This proves the upper bound in \eqref{E10}.

\paragraph {Centrally symmetric body.}
Assume now that $Q$ is centrally symmetric with respect to a point $q$.
We repeat the same ``double sector'' argument. It is enough to observe
that: (i) the center of $\Omega$ coincides with $q$, that is, $o=q$; and (ii) $x=y
\leq \frac{1}{2} \cdot D$ for any double sector $W$.
By \eqref{E4}, the average distance calculation yields now
$$ \mu_{(W \cap Q)}(o) \leq \frac{2x^3}{3(1-\eps)^2 \cdot x^2}
= \frac{2x}{3(1-\eps)^2} \leq \frac{D}{3(1-\eps)^2}, $$
and by taking the limit when $\eps$ tends to zero, we obtain
$$  \mu_Q(o) \leq \frac{D}{3}, $$
as required. The proof of Theorem~\ref{T2} is now complete.
\endproof

\section{Applications} \label{sec:app}

\noindent {\bf 1.}
Carmi, Har-Peled and Katz~\cite{CHK05} showed that given a convex
polygon $Q$ with $n$ vertices, and a parameter $\eps>0$, one can
compute an $\eps$-approximate Fermat-Weber center $q \in Q$ in
$O(n+1/\eps^4)$ time such that $\mu_Q(q) \leq (1+\eps) \mu^*_Q$.
Abu-Affash and Katz~\cite{AK08} gave a simple $O(n)$-time algorithm for
computing the center $q$ of the smallest disk enclosing $Q$, and showed that
$q$ approximates the Fermat-Weber center of $Q$, with
$\mu_Q(q) \leq \frac{25}{6 \sqrt3} \mu^*_Q$.
Our Theorems~\ref{T1} and~\ref{T2}, combined with their analysis,
improves the approximation ratio to about $2.09$:
$$ \mu_Q(q) \leq \frac{12(4-\sqrt3)}{13} \mu^*_Q. $$

\smallskip
\noindent {\bf 2.}
The value of the constant $c_1$ (i.e., the infimum of $\mu^*_Q/\Delta(Q)$
over all convex bodies $Q$ in the plane) plays a key role in the following
load balancing problem introduced by Aronov, Carmi and Katz~\cite{ACK06}.
We are given a convex body $D$ and $m$ points $p_1,p_2,\ldots , p_m$
representing {\em facilities} in the interior of $D$. Subdivide $D$ into $m$
convex regions, $R_1,R_2,\ldots ,R_m$, of equal area such that $\sum_{i=1}^m
\mu_{p_i}(R_i)$ is minimal. Here $\mu_{p_i}(R_i)$ is the {\em cost} associated
with facility $p_i$, which may be interpreted as the average travel time from the
facility to any location in its designated region, each of which has the same area.
One of the main results in~\cite{ACK06} is a $(8+\sqrt{2\pi})$-factor approximation
in the case that $D$ is an $n_1\times n_2$ rectangle for some integers $n_1,n_2\in \NN$.
This basic approximation bound is then used for several other cases, e.g.,
subdividing a convex fat domain $D$ into $m$ convex regions $R_i$.

By substituting $c_1=\frac{1}{6}$ (Theorem~\ref{T1}) into the analysis
in~\cite{ACK06}, the upper bound for the approximation ratio improves from
$8+\sqrt{2\pi}\approx 10.5067$ to $7+\sqrt{2\pi}\approx 9.5067$.
It can be further improved by optimizing another parameter
used in their calculation. Let $S$ be a unit square and let $s \in S$
be an arbitrary point in the square. Aronov et al.~\cite{ACK06} used the
upper bound $\mu_S(s)\leq \frac{2}{3}\sqrt{2}\approx 0.9429$. It is
clear that $\max_{s\in S}\mu_S(s)$ is attained if $s$ is a vertex of $S$.
The average distance of $S$ from such a vertex, say $v$, is $\mu_S(v)
= \frac{1}{3}\left(\sqrt{2}+\ln(1+\sqrt{2})\right)\approx 0.7652,$
and so $\mu_S(s)\leq \frac{1}{3}\left(\sqrt{2}+\ln(1+\sqrt{2})\right)$,
for any $s\in S$. With these improvements, the upper
bound on the approximation ratio becomes
$7+\frac{\sqrt{\pi}}{2}\left(\sqrt{2}+\ln(1+\sqrt{2})\right)\approx 9.0344$.

\paragraph{Acknowledgment.} We are grateful to Alex Rand for
stimulating discussions.

\vspace{-.6\baselineskip}

\end{document}